\newtheorem{thm}{Theorem}[section]
\newtheorem{cor}[thm]{Corollary}
\newtheorem{lem}[thm]{Lemma}
\theoremstyle{definition}
\numberwithin{equation}{section}
 \theoremstyle{remark}
\newtheorem{rem}{Remark}[section]
\newcommand{\n}{\nabla}
\newcommand{\tn}{\tilde{\nabla}}
\newcommand{\ld}{\Lambda}
\newcommand{\intm}{\int_{M^n}}
\begin{document}
\title{\text Second Eigenvalue of Paneitz Operators and Mean Curvature\bf}
\author []{ Daguang Chen  and Haizhong Li}
\email { dgchen@math.tsinghua.edu.cn,hli@math.tsinghua.edu.cn}
\address{Department of
Mathematical Sciences, Tsinghua University, Beijing, 100084, P.R.
China}

\subjclass{}
\thanks { The second author was partly supported by NSFC grant No.10971110.}

\keywords{The second eigenvalue, Paneitz operator, Mean curvature, Extrinsic estimate}

\begin{abstract}
For $n\geq 7$, we give the optimal estimate for the second eigenvalue of Paneitz
operators for compact $n$-dimensional submanifolds in an $(n+p)$-dimensional space form.
\end{abstract}
\maketitle
\renewcommand{\sectionmark}[1]{}

\section{introduction}
Assume that $M^n$ is a compact Riemannian manifold immersed into Euclidean space $\Bbb
R^{n+p}$. In \cite{Reilly77}, Reilly  obtained the estimates for the  first eigenvalue
$\lambda_1$ of Laplacian
\begin{equation}\label{Reilly77}
 \lambda_1\leq \frac{n}{V(M^n)}\intm |H|^2,
\end{equation}
where $H$ is the mean curvature vector of immersion $M^n$ in $\Bbb R^{n+p}$, $V(M^n)$ is
the volume of $M^n$. In \cite{SI92}, El Soufi and Ilias obtained the corresponding
estimates for submanifolds in unit sphere $\Bbb S^{n+p}(1)$, hyperbolic space $\Bbb
H^{n+p}(-1)$ and some other ambient spaces. Motivated by \cite{LY82}, El Soufi and Ilias
\cite{SI00} obtained the sharp estimates for the second eigenvalue of Schr\"{o}dinger
operator for compact submanifolds $M^n$ in space form $\Bbb R^{n+p}$, $\Bbb S^{n+p}(1)$
and hyperbolic space $\Bbb H^{n+p}(-1)$.

Given a smooth 4-dimensional Riemannian manifold $(M^4,g)$, the Paneitz operator, discovered
in \cite{Paneitz83}, is the fourth-order operator defined by
\begin{equation*}
P^4f=\Delta^2f-\text{div}\Big(\frac23 R \ \text{Id}-2Ric\Big)df, \qquad \text{for}\quad
f\in C^\infty(M^4),
\end{equation*}
where $\Delta$ is the scalar Laplacian defined by $\Delta=\text{div} d$, $\text{div}$ is
the divergence with respect to $g$, $R, Ric$ are the scalar curvature and Ricci curvature
respectively. The Paneitz operator was generalized to higher dimensions by Branson
\cite{Branson}. Given  a smooth compact Riemannian n-manifold $(M^n,g)$, $n\geq5$, let
$P$ be the operator defined by (see also \cite{CHY2004})
\begin{equation}\label{Paneitz}
Pf=\Delta^2f-\text{div}\Big(a_nR\ \text{Id}+b_nRic\Big)df+\frac{n-4}{2}Qf,
\end{equation}
where
\begin{equation}\label{Qcurvature}
\begin{aligned}
  Q&=c_n|Ric|^2+d_nR^2-\frac{1}{2(n-1)}\Delta R\\
&=\frac{n^2-4}{8n(n-1)^2}R^2-\frac{2}{(n-2)^2}|E|^2-\frac1{2(n-1)}\Delta R,\qquad E=Ric-\frac Rn g,
\end{aligned}
\end{equation}
and
\begin{equation}\label{abcd}
\begin{aligned}
&a_n=\frac{(n-2)^2+4}{2(n-1)(n-2)},  & b_n=-\frac{4}{n-2}, \\
&c_n=-\frac{2}{(n-2)^2}  & d_n=\frac{n^3-4n^2+16n-16}{8(n-1)^2(n-2)^2}.
\end{aligned}
\end{equation}
The operator $P$ is also called Paneitz operator (or Branson-Paneitz operator).

In \cite{CHY2004,Gursky99,HR2001,XuYang2002}, the authors  investigated positivity of the
Paneitz operator. In analogy with the conformal volume in \cite{LY82},  Xu and Yang
\cite{XuYang2002} defined N-conformal energy for compact 4-dimensional Riemannian
manifold immersed in N-dimensional sphere $\Bbb S^N(1)$. In the same paper
\cite{XuYang2002}, the upper bound for the first eigenvalue of Paneitz operator was
bounded by using n-conformal energy. In \cite{ChenLi09}, we obtained the sharp estimates
for the first eigenvalue of Paneitz operator for compact 4-dimensional submanifolds in
Euclidean space and unit sphere.

The aim of this paper is to obtain the optimal  estimates for the second eigenvalue of
Paneitz operator in terms of the extrinsic geometry of the compact submanifold $M^n$ in
space form $R^{n+p}(c)$(the Euclidean space $\Bbb R^{n+p}$ for $c=0$, the Euclidean unit
sphere $\Bbb S^{n+p}(1)$ for $c=1$ and the hyperbolic space $\Bbb H^{n+p}(-1)$ for
$c=-1$). Considering the first eigenvalue $\ld_1$ of $P$, it is easy to  see that it is
bounded by the mean value of the Q-curvature  on $M$,
\begin{equation}\label{1eigenvalue}
\ld_1\leq \frac{n-4}{2V(M^n)}\intm Qdv_g.
\end{equation}
Moreover, the inequality  (\ref{1eigenvalue}) is strict unless $Q$  is constant.

For the second eigenvalue we have the following
\begin{thm}\label{main thm}
Let $\phi: M^n\to R^{n+p}(c)$ be an $n$-dimensional ($n\geq 7$) compact submanifold. Then
the second eigenvalue $\lambda_2$ of Paneitz operator satisfies
\begin{equation}\label{Reilly1}
\ld_2V(M^n)\leq \frac12n(n^2-4)\intm\Big(|H|^2+c\Big)^2dv_g+\frac{n-4}{2}\intm Qdv_g.
\end{equation}
Moreover, the equality holds if and only if $\phi(M^n)$ is an n-dimensional geodesic
sphere $\Bbb S^n(r_c)$  in $R^{n+p}(c)$, where
\begin{equation}\label{radius}
r_0=\frac12\left(\frac{n(n+4)(n^2-4)}{\ld_2}\right)^{1/4},\quad r_1=arcsin\  r_0,\quad r_{-1}=sinh^{-1}r_0.
\end{equation}
\end{thm}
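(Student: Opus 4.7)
The plan is to adapt the conformal test-function strategy of Li--Yau~\cite{LY82} and El Soufi--Ilias~\cite{SI00} to the fourth-order Paneitz operator. I would first embed the ambient space form $R^{n+p}(c)$ into $\mathbb{R}^{n+p+1}$ as a quadric (using Lorentzian signature when $c=-1$, and precomposing with inverse stereographic projection into $S^{n+p}(1)$ when $c=0$), denoting the ambient coordinates by $y_0, \ldots, y_{n+p}$ with $\sum_i y_i^2 = 1$. Via the M\"obius group of $S^{n+p}(1)$ (parametrized by the open ball $B^{n+p+1}$) and the standard Brouwer--Hopf degree argument, I would choose a conformal transformation $\gamma$ such that the functions $f_i := y_i \circ \gamma \circ \phi$ are $L^2(M,g)$-orthogonal to the first eigenfunction $u_1$ of $P$. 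By construction $\sum_i f_i^2 \equiv 1$ pointwise on $M$.

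With these $n+p+1$ test functions available, the variational characterization of $\lambda_2$ gives $\lambda_2 \int_M f_i^2\, dv_g \leq \int_M f_i P f_i\, dv_g$ for each $i$, and summing yields $\lambda_2 V(M^n) \leq \sum_i \int_M f_i P f_i\, dv_g$. To estimate the right-hand side I would view $\tilde{\phi} := \gamma \circ \phi$ as a conformal immersion $M \to S^{n+p}(1) \subset \mathbb{R}^{n+p+1}$ with induced metric $\tilde{g} = e^{2u}g$. The sphere-embedding identity $\sum_i (\tilde{\Delta}_{\tilde{g}} f_i)^2 = n^2(|\tilde{H}|^2 + 1)$, together with the trace relations $\sum_i |\tilde{\nabla} f_i|_{\tilde{g}}^2 = n$ and $\sum_i \widetilde{\mathrm{Ric}}(\tilde{\nabla} f_i, \tilde{\nabla} f_i) = \tilde{R}$, and the conformal covariance of the Paneitz operator, produce a sphere-geometric expression for $\sum_i \int f_i P f_i\, dv_g$ in terms of $\tilde{g}$-quantities. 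The conformal invariance of the Willmore-type measure $(|H|^2+c)^{n/2}dv_g$ then converts this expression back to the original metric $g$.

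Next, the Gauss equation $R = n(n-1)c + n^2|H|^2 - |A|^2$ combined with the pointwise Cauchy--Schwarz bound $|A|^2 \geq n|H|^2$ (with equality iff $M$ is umbilical) yields $R \leq n(n-1)(|H|^2+c)$, and the algebraic identity $n^2 + (na_n + b_n)n(n-1) = \tfrac{1}{2}n(n^2-4)$ (which hinges on the explicit values of $a_n, b_n$) packages the curvature contributions into the target $\tfrac{1}{2}n(n^2-4)\int_M(|H|^2+c)^2 dv_g$; the remaining $\tfrac{n-4}{2}\int_M Q\, dv_g$ comes directly from the zeroth-order term of $P$ acting on functions with $\sum_i f_i^2 = 1$. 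Equality throughout forces each $f_i$ to be a $\lambda_2$-eigenfunction and $|A|^2 = n|H|^2$ pointwise, so $M^n$ is totally umbilical and hence a geodesic sphere $S^n(r_c) \subset R^{n+p}(c)$; the specific radius $r_c$ is recovered by matching $\lambda_2$ against the explicit Paneitz spectrum of round spheres, as in~\cite{ChenLi09}.

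The main obstacle will be reconciling the conformal factor $u$ from the M\"obius balancing with the algebraic structure of Paneitz: the naive test-function calculation would produce only the linear Willmore term $\int(|H|^2+c)\,dv_g$, so obtaining the quadratic $\int(|H|^2+c)^2\,dv_g$ on the right-hand side requires an additional Cauchy--Schwarz/H\"older absorption of the conformal-factor contributions against the conformally invariant integrand $(|H|^2+c)^{n/2}dv_g$. The dimension hypothesis $n\geq 7$ likely enters to guarantee positivity of the coefficients produced by this absorption step and to ensure that the first eigenfunction $u_1$ is well-separated from $\lambda_2$ so that the conformal-balancing degree argument runs to completion.
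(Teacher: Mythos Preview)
Your outline matches the paper's approach: conformal balancing of the coordinate functions against the first Paneitz eigenfunction, summing the Rayleigh quotients, and combining the Gauss inequality $R\le n(n-1)(|H|^2+c)$ with the identity $n^2+(na_n+b_n)n(n-1)=\tfrac12 n(n^2-4)$. Two places where the paper's mechanics differ from your sketch deserve clarification.

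First, the conversion from $\tilde g$- to $g$-quantities does not go through conformal covariance of $P$ or the Willmore density $(|H|^2+c)^{n/2}dv_g$; the paper works in the $g$-metric throughout and uses instead the pointwise conformal invariance of the trace-free second fundamental form, $e^{2u}(|\tilde{\mathbf h}|^2-n|\tilde H|^2)=|\mathbf h|^2-n|H|^2$. Via the two Gauss equations this gives the key identity
\[
e^{2u}\bigl(|\tilde H|^2+1\bigr)=|H|^2+c-\tfrac{2}{n}\Delta u-\tfrac{n-2}{n}|\nabla u|^2,
\]
which together with $\langle X_j,X_k\rangle=e^{2u}\delta_{jk}$ turns the summed Rayleigh quotient into $\tfrac12 n(n^2-4)\int e^{2u}(|H|^2+c)+2(n+2)\int e^{2u}|\nabla u|^2+\tfrac{n-4}{2}\int Q$.

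Second, the absorption step you correctly anticipate is not a H\"older estimate against a conformally invariant density. One multiplies the displayed identity by $e^{2u}$, drops the nonnegative term $e^{4u}|\tilde H|^2$, integrates $e^{2u}\Delta u$ by parts, and applies $2ab\le a^2+b^2$ with $a=e^{2u}$, $b=|H|^2+c$, obtaining
\[
\int_M e^{2u}(|H|^2+c)\;\le\;\int_M(|H|^2+c)^2-\frac{n-6}{n}\int_M e^{2u}|\nabla u|^2.
\]
Feeding this back leaves a gradient remainder with coefficient $-\tfrac12(n+2)(n^2-8n+8)$, which is nonpositive precisely when $n\ge 7$. So your guess about where the dimension restriction enters is right, but the Li--Yau balancing argument is purely topological and needs no spectral-gap hypothesis between $\lambda_1$ and $\lambda_2$.
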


\begin{rem}
For the n-dimensional geodesic sphere $\Bbb S^n(r_c)$ in $R^{n+p}(c)$, we have
\begin{equation*}
 \begin{aligned}
 \ld_1&=\frac1{16}n(n-4)(n^2-4)(|H|^2+c)^2\\
 \ld_2&=\frac1{16}n(n+4)(n^2-4)(|H|^2+c)^2\\
     Q&=\frac18n(n^2-4)\left(|H|^2+c\right)^2.
 \end{aligned}
 \end{equation*}
\end{rem}

From (\ref{Qcurvature}) and (\ref{Reilly1}), we can reach
\begin{cor}Under the same assumptions as in the theorem \ref{main thm}, then
\begin{equation}\label{Reilly2}
\ld_2V(M^n)\leq
\frac12n(n^2-4)\intm\Big(|H|^2+c\Big)^2dv_g+\frac{(n-4)(n^2-4)}{16n(n-1)^2}\intm R^2.
\end{equation}
Moreover, the equality holds if and only if $M^n$ is an n-dimensional geodesic sphere.
\end{cor}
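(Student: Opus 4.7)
The plan is to read off the corollary directly from Theorem~\ref{main thm} by using \eqref{Qcurvature} to replace $\int_{M^n} Q\, dv_g$ with an expression involving $\int_{M^n} R^2\, dv_g$.

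First I would integrate the second line of \eqref{Qcurvature} over the compact manifold $M^n$. Since $M^n$ has no boundary, the divergence theorem gives
\[
\int_{M^n}\Delta R\, dv_g = 0,
\]
so the $-\frac{1}{2(n-1)}\Delta R$ term drops out of $\int_{M^n} Q\, dv_g$. The remaining contribution from the traceless Ricci term $-\frac{2}{(n-2)^2}|E|^2$ is pointwise nonpositive, hence
\[
\int_{M^n} Q\, dv_g \;\leq\; \frac{n^2-4}{8n(n-1)^2}\int_{M^n} R^2\, dv_g,
\]
with equality if and only if $E\equiv 0$, i.e.\ $M^n$ is Einstein.

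Substituting this bound into \eqref{Reilly1} and multiplying by $\tfrac{n-4}{2}$ yields exactly \eqref{Reilly2}, so the inequality part of the corollary is immediate.

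The only point that needs a brief argument is the equality case. If equality holds in \eqref{Reilly2}, then equality must hold simultaneously in \eqref{Reilly1} and in the bound on $\int_{M^n} Q\, dv_g$ above. The first forces $\phi(M^n)$ to be a geodesic sphere $\Bbb S^n(r_c)\subset R^{n+p}(c)$ by Theorem~\ref{main thm}; the second forces $E\equiv 0$. A geodesic sphere has constant sectional curvature and is in particular Einstein, so $E\equiv 0$ is automatic, and the two conditions are compatible. Conversely, if $\phi(M^n)$ is such a geodesic sphere, then both equalities hold (the $Q$-identity reduces to the one in the Remark), giving equality in \eqref{Reilly2}. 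No step presents a genuine obstacle; the work is essentially the algebraic substitution together with this consistency check in the equality case.
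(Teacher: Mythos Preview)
Your proposal is correct and follows exactly the route the paper indicates: the paper simply says ``From \eqref{Qcurvature} and \eqref{Reilly1}, we can reach'' the corollary, and you have filled in precisely those details (dropping $\int_{M}\Delta R$ by compactness, discarding the nonpositive $-\tfrac{2}{(n-2)^2}|E|^2$ term, and using $n\geq 7$ so that $\tfrac{n-4}{2}>0$ preserves the inequality). Your treatment of the equality case, checking that the Einstein condition $E\equiv 0$ is automatic on a geodesic sphere, is also correct and slightly more explicit than the paper.
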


\begin{rem}
 We note that our technique in proof of Theorem 1.1 does not work for $3\leq n\leq 6$,
so it is interesting to know that Theorem 1.1 is true or not for $3\leq n\leq 6$.
\end{rem}

\section{Some lemmas}
Assume that $\phi: M^n\to R^{n+p}(c)$ is an $n$-dimensional compact submanifold
in an $(n+p)$-dimensional space form $R^{n+p}(c)$. From \cite{LY82}(see also \cite{SI00}), it is known that
\begin{lem}\label{orth}
Let $w$ be the first eigenfunction of Paneitz operator $P$ on $M^n$.
Then there exists a regular conformal map
\begin{equation}\label{regularc}
\Gamma: R^{n+p}(c)\to \Bbb S^{n+p}(1)\subset R^{n+p+1}
\end{equation}
such that for all $1\leq \alpha\leq n+p+1$, the immersion $X=\Gamma\circ
\phi=(X^1,\cdots,X^{n+p+1})$ satisfies
\begin{equation*}
 \intm X^\alpha wdv_g=0
\end{equation*}
where $g$ is induced metric of $\phi: M^n\longrightarrow R^{n+p}(c)$.
\end{lem}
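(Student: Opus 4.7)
The plan is to prove the lemma by a topological center-of-mass argument in the spirit of Hersch, Li--Yau, and El Soufi--Ilias. First I would conformally embed the ambient space into the sphere: for $c=1$ take the identity, for $c=0$ take inverse stereographic projection $\Bbb R^{n+p}\hookrightarrow \Bbb S^{n+p}(1)$, and for $c=-1$ use the standard conformal embedding of hyperbolic space as an open hemisphere of $\Bbb S^{n+p}(1)\subset \Bbb R^{n+p+1}$. Call this conformal embedding $\iota_c$. Since a composition of conformal maps is conformal, it suffices to find a conformal diffeomorphism $\sigma:\Bbb S^{n+p}(1)\to \Bbb S^{n+p}(1)$ such that the components of $X:=\sigma\circ \iota_c\circ \phi:M^n\to \Bbb R^{n+p+1}$ are all $L^2$-orthogonal to $w$; the regular conformal map is then $\Gamma:=\sigma\circ \iota_c$.

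Next I would use the standard parametrization of a natural family of conformal diffeomorphisms of $\Bbb S^{n+p}(1)$ by the open unit ball $B^{n+p+1}\subset \Bbb R^{n+p+1}$. For each $\xi\in B^{n+p+1}$ there is a M\"obius transformation $\sigma_\xi$ (for example the conjugate by stereographic projection of translation by $\xi$ in $\Bbb R^{n+p}$), depending smoothly on $\xi$, with $\sigma_0=\mathrm{Id}$, and with the crucial boundary behaviour that $\sigma_\xi$ converges pointwise away from $-\xi/|\xi|$ to the constant map $\xi/|\xi|$ as $|\xi|\to 1$. Define
\begin{equation*}
T:B^{n+p+1}\to \Bbb R^{n+p+1},\qquad T(\xi):=\intm \bigl(\sigma_\xi\circ \iota_c\circ \phi\bigr)\,w\,dv_g.
\end{equation*}
Because $\sigma_\xi\circ \iota_c\circ\phi$ takes values in the compact unit sphere and $w$ is bounded, dominated convergence shows that $T$ extends continuously to $\overline{B}^{\,n+p+1}$, with boundary value $T(\xi)=\bigl(\intm w\,dv_g\bigr)\xi$ for $|\xi|=1$. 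Whenever $\intm w\,dv_g\neq 0$, the restriction $T|_{\partial B}$ is a nonzero scalar multiple of the identity, so the Brouwer no-retraction theorem forces $T$ to vanish at some interior $\xi_*\in B^{n+p+1}$, and $\Gamma:=\sigma_{\xi_*}\circ \iota_c$ does the job.

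The main obstacle is the degenerate case $\intm w\,dv_g=0$ (which occurs precisely when the first eigenspace of $P$ is $L^2$-orthogonal to constants). In that case the boundary extension of $T$ is identically zero and the crude no-retraction argument collapses. Following \cite{LY82} and \cite{SI00}, the remedy is to renormalize by $\intm |w|\,dv_g$ and to split $w=w_+-w_-$: one considers the balanced centers of mass of the two probability measures $w_\pm\,dv_g/\intm w_\pm\,dv_g$ and, exploiting the degenerating maps $\sigma_\xi$, runs a degree-theoretic argument (applied to the difference of the two center-of-mass maps) that still produces a $\xi_*\in B^{n+p+1}$ with $T(\xi_*)=0$. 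Regularity of the resulting $\Gamma$ is automatic: $\sigma_{\xi_*}$ is a diffeomorphism of $\Bbb S^{n+p}(1)$, and $\iota_c$ is a regular conformal immersion on all of $R^{n+p}(c)$ for $c=0,\pm 1$.
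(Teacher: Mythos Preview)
The paper gives no proof of this lemma at all; it simply attributes it to \cite{LY82} and \cite{SI00}. Your argument is exactly the Hersch--Li--Yau/El Soufi--Ilias center-of-mass construction invoked in those references, so your approach coincides with what the paper is citing.

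One comment on the degenerate case $\intm w\,dv_g=0$ that you raise. In \cite{SI00} the operator in question is a second-order Schr\"odinger operator, so its first eigenfunction $w$ is strictly positive by the strong maximum principle and $\intm w\,dv_g>0$ automatically; the issue never arises there, and neither \cite{LY82} nor \cite{SI00} contains the $w=w_+-w_-$ argument you attribute to them. For the fourth-order Paneitz operator no maximum principle is available and the first eigenfunction may change sign, so your concern is legitimate. However, the fix you sketch does not close the gap as written: both centers of mass $T_+(\xi)$ and $T_-(\xi)$ tend to $\xi/|\xi|$ as $|\xi|\to 1$, so $T_+-T_-$ extends by zero to $\partial B^{n+p+1}$ and carries no degree information by itself. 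The paper is silent on this point.
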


Assume that $\tilde {g}=e^{2u}g$ is a conformal transformation for $u\in C^\infty(M)$, then the scalar curvature obeys   \cite{RY1994}
\begin{equation}\label{cscalar}
e^{2u}{\tilde R}=R-2(n-1)\Delta u-(n-1)(n-2)|\nabla u|^2,
\end{equation}
the gradient operator and the Laplacian  follows
\begin{equation}\label{cLaplace}
{\tilde\Delta}f=e^{-2u}\left[\Delta f+(n-2) \n u \cdot\n f\right],
\end{equation}
\begin{equation}\label{cgradient}
\tn f=e^{-u}\n f,\qquad e^{2u}|{\tilde\nabla}f|^2=|\nabla f|^2,
\end{equation}
where $\n$ and $\Delta$ (resp. $\tn$ and $\tilde\Delta$) are the Levi-Civita connection
and Laplacian with respect to $g$ (resp. $\tilde g$).

We have the following relation under conformal transformation $\tilde {g}=e^{2u}g$ (see
page 766 in \cite{SI00}),
\begin{lem}
Let $\phi: M^n\to R^{n+p}(c)$ be an $n$-dimensional submanifold and $X=\Gamma\circ \phi$
as before. Then we have
\begin{equation}\label{p1}
e^{2u}(|\tilde {\textbf{h}}|^2-n|\tilde H|^2)=|\textbf{h}|^2-n|H|^2,
\end{equation}
where $\textbf{h}, \tilde {\textbf{h}}$ are the second fundamental form of the immersion
$\phi$ and $X$ respectively, $H=\frac1n\text{tr}{\textbf{h}}$ and $\tilde H=\frac1n
\text{tr}\tilde {\textbf{h}}$ are the mean curvature vectors, $u$ is defined by
\begin{equation}\label{p2}
e^{2u}=\frac{1}{n}|\nabla(\Gamma\circ\phi)|^2.
\end{equation}
\end{lem}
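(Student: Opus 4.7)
The key observation is that $|\mathbf{h}|^2 - n|H|^2$ is the squared norm of the traceless second fundamental form $\Phi := \mathbf{h} - H\otimes g$, and $\Phi$ is conformally covariant. Since $\Gamma : R^{n+p}(c) \to \Bbb S^{n+p}(1)$ is a conformal diffeomorphism, there exists a function $v$ on $R^{n+p}(c)$ with $\Gamma^* g_{\Bbb S} = e^{2v}\,g_{R^{n+p}(c)}$; putting $u = v \circ \phi$ yields $\tilde g = e^{2u}\,g$ on $M^n$. The identity $e^{2u} = \frac{1}{n}|\nabla(\Gamma\circ\phi)|^2$ then follows by taking the $g$-trace of $\tilde g_{ij} = \partial_i X \cdot \partial_j X$ for the embedding $X = \Gamma\circ\phi \subset \Bbb R^{n+p+1}$, since $\mathrm{tr}_g\,\tilde g = n e^{2u}$.

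\textbf{Main computation.} I would next derive the conformal transformation law for $\mathbf{h}$ and $H$ under $\bar{\tilde g} = e^{2v}\bar g$. Starting from the standard Koszul-type formula
\[
\bar{\tilde\nabla}_X Y = \bar\nabla_X Y + X(v)Y + Y(v)X - \bar g(X,Y)\,\bar\nabla v,
\]
and projecting onto the normal bundle gives, for $X,Y$ tangent to $M^n$,
\[
\tilde{\mathbf{h}}(X,Y) = \mathbf{h}(X,Y) - g(X,Y)\,(\bar\nabla v)^\perp.
\]
Taking the $\tilde g$-trace yields $\tilde H = e^{-2u}\bigl(H - (\bar\nabla v)^\perp\bigr)$, and substituting back one finds
\[
\tilde{\mathbf{h}}(X,Y) - \tilde H\,\tilde g(X,Y) = \mathbf{h}(X,Y) - H\,g(X,Y),
\]
so $\Phi$ is genuinely conformally invariant as a tensor on $M^n$.

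\textbf{Conclusion.} It remains only to track how the norm rescales. The tensor $\Phi$ is a section of $T^*M \otimes T^*M \otimes NM$, contracted twice with the inverse tangent metric (contributing a factor $e^{-4u}$) and once with the normal metric (contributing $e^{2u}$), which gives $|\tilde\Phi|^2_{\tilde g} = e^{-2u}|\Phi|^2_g$; this is precisely $e^{2u}(|\tilde{\mathbf{h}}|^2 - n|\tilde H|^2) = |\mathbf{h}|^2 - n|H|^2$. The main technical step is deriving the conformal transformation law for $\mathbf{h}$ itself and verifying that the correction term $(\bar\nabla v)^\perp$ cancels exactly upon removing the trace; once this cancellation is in place, the rest reduces to routine bookkeeping with $e^{\pm u}$ factors.
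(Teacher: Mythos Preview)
Your argument is correct: the identity is precisely the statement that the traceless second fundamental form $\Phi=\mathbf h-H\otimes g$ is pointwise conformally invariant, together with the obvious rescaling of its squared norm. Your derivation of $\tilde{\mathbf h}(X,Y)=\mathbf h(X,Y)-g(X,Y)(\bar\nabla v)^\perp$ from the Koszul formula, the resulting cancellation in $\tilde\Phi=\Phi$, and the weight count $e^{-4u}\cdot e^{2u}=e^{-2u}$ for the norm are all fine; the verification that $e^{2u}=\tfrac1n|\nabla X|^2$ via $\mathrm{tr}_g\tilde g=ne^{2u}$ is also correct.

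As for comparison with the paper: note that the paper does \emph{not} give its own proof of this lemma. It simply quotes the identity and refers the reader to page~766 of El~Soufi--Ilias \cite{SI00}. The argument there is essentially the one you wrote---the conformal covariance of the umbilicity tensor---so your proposal supplies exactly the proof that the paper chose to outsource.
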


We need also the following result (see also \cite{SI00}):
\begin{lem}Let $\phi: M\to R^{n+p}(c)$ be an $n$-dimensional submanifold and
$X=\Gamma\circ \phi$ as before. Then we have
 \begin{equation}\label{p3}
e^{2u}\left(|{\tilde H}|^2+1\right)=|H|^2+c-\frac{2}{n}\Delta u-\frac{n-2}{n}|\nabla u|^2.
\end{equation}
\end{lem}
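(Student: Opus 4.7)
My plan would be to combine the Gauss equation, applied to $\phi$ and to $X$ in their respective ambients, with the conformal identity (\ref{p1}) from Lemma 2.2 and the conformal scalar-curvature formula (\ref{cscalar}). Since (\ref{p3}) does not involve any second fundamental forms, the strategy is to eliminate both $|\mathbf{h}|^2$ and $|\tilde{\mathbf{h}}|^2$ by trading them, via Gauss, for scalar curvatures.

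First I would invoke the Gauss equation for $\phi:(M,g)\to R^{n+p}(c)$; because the ambient has constant sectional curvature $c$, it reads $R = n(n-1)c + n^2|H|^2 - |\mathbf{h}|^2$, which rearranges to
\begin{equation*}
|\mathbf{h}|^2 - n|H|^2 = n(n-1)(|H|^2+c) - R.
\end{equation*}
Applying the same identity to $X=\Gamma\circ\phi:(M,\tilde g)\to\Bbb S^{n+p}(1)$, whose ambient has constant curvature $1$, would produce
\begin{equation*}
|\tilde{\mathbf{h}}|^2 - n|\tilde H|^2 = n(n-1)(|\tilde H|^2+1) - \tilde R.
\end{equation*}

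I would then multiply the second identity by $e^{2u}$ and invoke (\ref{p1}) to equate it with the first, obtaining after a brief rearrangement
\begin{equation*}
n(n-1)\,e^{2u}(|\tilde H|^2+1) = n(n-1)(|H|^2+c) + \bigl(e^{2u}\tilde R - R\bigr).
\end{equation*}
The conformal transformation law (\ref{cscalar}) then supplies $e^{2u}\tilde R - R = -2(n-1)\Delta u - (n-1)(n-2)|\nabla u|^2$, and dividing through by $n(n-1)$ yields precisely (\ref{p3}).

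The manipulation is entirely algebraic once the two Gauss identities are in hand, so I do not anticipate a genuine obstacle. The only bookkeeping point is to apply the Gauss equation with the correct induced metric on each side ($g$ for the immersion $\phi$, and $\tilde g$ for the composition $X$), but this matching of metrics is exactly what is already encoded in (\ref{p1}).
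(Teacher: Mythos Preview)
Your proposal is correct and follows essentially the same route as the paper: both arguments write the Gauss equation for $\phi$ and for $X$, use (\ref{p1}) to eliminate the traceless second fundamental forms, and then substitute the conformal scalar-curvature law (\ref{cscalar}) before dividing by $n(n-1)$. There is no substantive difference in method.
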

\begin{proof}
The Gauss equation for $\phi:M\to R^{n+p}(c)$ states
\begin{equation}\label{Gauss1}
|\textbf{h}|^2-n|H|^2=n(n-1)|H|^2+n(n-1)c-R,
\end{equation}
Similarly,
\begin{equation}\label{Gauss2}
|{\tilde {\textbf{h}}}|^2-n|{\tilde H}|^2=n(n-1)|{\tilde H}|^2+n(n-1)-{\tilde R},
\end{equation}
Combining (\ref{p1}), (\ref{Gauss1}) and (\ref{Gauss2}), we get
\begin{equation}\label{p4}
n(n-1)(|H|^2+c)-R=
\left[n(n-1)\left(|{\tilde H}|^2+1\right)-{\tilde R}\right]e^{2u},
\end{equation}
i.e.
\begin{equation}\label{p5}
 n(n-1)e^{2u}(|{\tilde H}|^2+1)=n(n-1)(|H|^2+c)-[R-e^{2u}{\tilde R}]
\end{equation}
From (\ref{cscalar}), we have
\begin{equation*}
R-e^{2u}{\tilde R}=2(n-1)\Delta u+(n-1)(n-2)|\nabla u|^2,
\end{equation*}
i.e.
\begin{equation}\label{p6}
\frac{1}{n(n-1)}(R-e^{2u}{\tilde R})=\frac{2}{n}\Delta u+\frac{n-2}{n}|\nabla u|^2.
\end{equation}
Inserting (\ref{p6}) into (\ref{p5}) yields (\ref{p3}).
\end{proof}

The following lemma is crucial in the proof of our Theorem 1.1.

\begin{lem}Let $\phi: M\to R^{n+p}(c)$ be an $n$-dimensional compact submanifold,
$X=\Gamma\circ \phi$ as before and $u$ be defined by (\ref{p2}), then
\begin{equation}\label{p7}
\intm e^{2u}(|H|^2+c)\leq
  \intm (|H|^2+c)^2-\frac{n-6}{n}\intm e^{2u}|\n u|^2.
\end{equation}
\end{lem}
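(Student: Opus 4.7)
The plan is to combine the pointwise identity \eqref{p3} with the Cauchy--Schwarz inequality. For the setup, I multiply \eqref{p3} by $e^{2u}$ and integrate over $M^n$; since integration by parts gives $\intm e^{2u}\Delta u\,dv_g = -2\intm e^{2u}|\n u|^2\,dv_g$, the result is
\begin{equation*}
\intm e^{4u}(|\tilde H|^2+1)\,dv_g \;=\; \intm e^{2u}(|H|^2+c)\,dv_g - \frac{n-6}{n}\intm e^{2u}|\n u|^2\,dv_g.
\end{equation*}
This identity already brings out the coefficient $\tfrac{n-6}{n}$ appearing in \eqref{p7}, and its left-hand side is manifestly nonnegative.

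Next, I apply the Cauchy--Schwarz inequality in $L^2(M^n,g)$ to the factorization $e^{2u}(|H|^2+c) = \bigl[e^{2u}(|\tilde H|^2+1)^{1/2}\bigr]\cdot\bigl[(|H|^2+c)(|\tilde H|^2+1)^{-1/2}\bigr]$; after using $|\tilde H|^2+1\geq 1$ to drop the denominator in the second factor, this gives
\begin{equation*}
\Bigl(\intm e^{2u}(|H|^2+c)\,dv_g\Bigr)^2 \;\leq\; \intm e^{4u}(|\tilde H|^2+1)\,dv_g \cdot \intm(|H|^2+c)^2\,dv_g.
\end{equation*}
Setting $X=\intm e^{2u}(|H|^2+c)$, $Y=\tfrac{n-6}{n}\intm e^{2u}|\n u|^2\geq 0$, and $Z=\intm(|H|^2+c)^2$, the first display reads $X-Y\geq 0$ (hence $X\geq 0$), while the Cauchy--Schwarz bound reads $X^2\leq(X-Y)Z$. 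The AM--GM inequality $\sqrt{(X-Y)Z}\leq\tfrac12\bigl[(X-Y)+Z\bigr]$ then yields $2X\leq(X-Y)+Z$, i.e.\ $X+Y\leq Z$, which is precisely \eqref{p7}.

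The only step requiring genuine insight is the weighting in Cauchy--Schwarz: the weight $(|\tilde H|^2+1)^{1/2}$ is forced by the need to produce the integrand $e^{4u}(|\tilde H|^2+1)$ on the right so that the first display can close the argument. The sign constraint $Y\geq 0$ requires $n\geq 6$, which is exactly why the technique does not extend to $3\leq n\leq 6$, as the authors remark.
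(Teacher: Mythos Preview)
Your argument is valid for $n\geq 6$ (hence adequate for Theorem~\ref{main thm}), but it is more roundabout than the paper's, and your closing commentary misidentifies the source of the dimension restriction.

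The paper's route is shorter. After your first displayed identity, simply drop the nonnegative term $|\tilde H|^2$ to obtain $\intm e^{4u}\leq X-Y$ (this is the paper's (\ref{p9})). Then apply the \emph{pointwise} inequality $2e^{2u}(|H|^2+c)\leq e^{4u}+(|H|^2+c)^2$, which holds for arbitrary real numbers regardless of the sign of $|H|^2+c$. Integrating and substituting gives $2X\leq (X-Y)+Z$ at once. No weighted Cauchy--Schwarz is needed, the sign of $X$ never enters, and the lemma is proved for every $n$. Your detour through the weight $(|\tilde H|^2+1)^{1/2}$, only to discard it immediately via $|\tilde H|^2+1\geq 1$, gains nothing and is precisely what forces you to assume $Y\geq 0$.

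Consequently your final paragraph is off on both counts. The weighting is not ``forced''; the paper shows it is dispensable. And the restriction $n\geq 7$ in Theorem~\ref{main thm} does \emph{not} come from this lemma (which, with the paper's argument, holds for all $n$): it arises later, in (\ref{p17}), where the combined coefficient of $\intm e^{2u}|\nabla u|^2$ works out to $\tfrac12(n+2)(n^2-8n+8)$, and $n^2-8n+8\geq 0$ requires $n\geq 7$.
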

\begin{proof}
Multiplying $e^{2u}$ in both sides of (\ref{p3}), we have
\begin{equation}\label{p8}
e^{4u}(|{\tilde H}|^2+1)=e^{2u}(|H|^2+c)-\frac{2}{n}e^{2u} \Delta
u-\frac{n-2}{n}e^{2u}|\nabla u|^2.
\end{equation}

Integrating (\ref{p8}) over $M$ and noting
$$
\intm e^{2u}\Delta u=-2\intm e^{2u}|\nabla u|^2,
$$
we can get
\begin{equation}\label{p9}
\intm e^{4u}\leq \intm e^{2u}(|H|^2+c)- \frac{n-6}{n}\intm
e^{2u}|\nabla u|^2.
\end{equation}
From the  Cauchy-Schwartz inequality and (\ref{p9}), we have
$$
\begin{aligned}
2\intm e^{2u}(|H|^2+c)&\leq \intm e^{4u}+\intm (|H|^2+c)^2\\
&\leq \intm e^{2u}(|H|^2+c)-\frac{n-6}{n}\intm e^{2u}|\nabla
u|^2+\intm (|H|^2+c)^2.
\end{aligned}
$$
This inequality implies (\ref{p7}).
\end{proof}

\section{Proof of Theorem 1.1 }

Assume that $\phi: M^n\to R^{n+p}(c)$ be an $n$-dimensional compact submanifold in an
$(n+p)$-dimensional space form $R^{n+p}(c)$. From Lemma \ref{orth}, there exists a
regular conformal map
\begin{equation*}
\Gamma: R^{n+p}(c)\to \Bbb S^{n+p}(1)\subset R^{n+p+1}
\end{equation*}
such that the immersion $X=\Gamma\circ
\phi=(X^1,\cdots,X^{n+p+1})$ satisfies
\begin{equation*}
 \intm X^\alpha wdv_g=0,\qquad \text{for all}\  1\leq \alpha\leq n+p+1,
\end{equation*}
where $w$ is the first eigenfunction of the Paneitz operator on $M^n$.

Let $\ld_2$ be  the second eigenvalue of Paneitz operator $P$. From the max-min principle
for the Paneitz operator, we have
 \begin{equation}\label{2eigenvalue}
\ld_2\intm (X^\alpha)^2dv_g\leq \intm P(X^\alpha)\cdot X^\alpha dv_g,\qquad 1\leq
\alpha\leq n+p+1.
\end{equation}

 Making summation over $\alpha$ from $1$ to $n+p$ in  (\ref{2eigenvalue}),
using the fact  $\sum_{\alpha=1}^{n+p+1}(X^\alpha)^2=1$ and (\ref{Paneitz}),  we can
obtain
\begin{equation}\label{p10}
\begin{aligned}
\ld_2V(M)&\leq
\sum_{\alpha=1}^{n+p+1}\intm P(X^\alpha)\cdot X^\alpha dv_g\\
&=
\intm\Big[\sum_{\alpha=1}^{n+p+1}\Delta^2X^\alpha\cdot X^\alpha-\sum_{j,k=1}^n<\{(a_nR\delta_{jk}+b_nR_{jk})X_j\}_k,X>\\
&\qquad+\frac{n-4}{2}\intm Q|X|^2\Big]dv_g\\
&=
\intm<\Delta X,\Delta X>dv_g+\intm\sum_{j,k=1}^n<(a_nR\delta_{jk}+b_nR_{jk})X_j,X_k>\\
&\qquad+\frac{n-4}{2}\intm Q|X|^2dv_g
\end{aligned}
\end{equation}
where we use Stokes' formula in the second equality.

By (\ref{cLaplace}) and (\ref{cgradient}), we have the following calculations
\begin{equation}\label{p11}
\begin{aligned}
&<\Delta X,\Delta X>\\
&=e^{4u}<{\tilde \Delta}X-(n-2){\tilde \nabla}u\cdot {\tilde\nabla}X,{\tilde \Delta}X-(n-2){\tilde \nabla}u\cdot {\tilde\nabla}X>\\
&=e^{4u}<n{\tilde H}-nX-(n-2){\tilde \nabla}u\cdot {\tilde\nabla}X,
n{\tilde H}-nX-(n-2){\tilde \nabla}u\cdot {\tilde\nabla}X>\\
&=e^{4u}[n^2|{\tilde H}|^2+n^2+(n-2)^2|{\tilde \nabla}u|^2]\\
&=e^{2u}[n^2e^{2u}|{\tilde H}|^2+n^2e^{2u}+(n-2)^2|\nabla u|^2],
\end{aligned}
\end{equation}
where ${\tilde H}$ is the mean curvature vector of $X=\Gamma\circ \phi:M^n\longrightarrow
\Bbb S^{n+p}(1)$, here we used  in the second equality  the following well-known formula
$\tilde \Delta X=n\tilde H-nX$.

Noting
\begin{equation}\label{p12}
<X_j,X_k>=e^{2u}\delta_{jk},
\end{equation}
and putting (\ref{p11}) into (\ref{p10}), we have
\begin{equation}\label{p13}
\begin{aligned}
\ld_2V(M)&\leq \intm e^{2u}
\Big[n^2e^{2u}\left(|{\tilde H}|^2+1\right)+(n-2)^2|\nabla u|^2\Big]dv_g\\
&\quad +(na_n+b_n)\intm Re^{2u}dv_g+\frac{n-4}{2}\intm Qdv_g.
\end{aligned}
\end{equation}

Putting (\ref{p3}) into (\ref{p13}) and by use of  the definitions of $a_n, b_n$ in
(\ref{abcd}), we obtain
\begin{equation}\label{p14}
\begin{aligned}
\ld_2V(M)&\leq
\intm e^{2u}\Big[n^2\big(|H|^2+c-\frac{2}{n}\Delta u-\frac{n-2}{n}|\nabla u|^2\big)\\
&\qquad+(n-2)^2|\nabla u|^2\Big]dv_g+
(na_n+b_n)\intm R e^{2u}dv_g+\frac{n-4}{2}\intm Qdv_g\\
&=
n^2\intm e^{2u}(|H|^2+c)dv_g
+(na_n+b_n)\intm R e^{2u}dv_g
\\ &\qquad+\frac{n-4}{2}\intm Qdv_g+\intm e^{2u}\Big((n-2)^2-(n-2)n+4n\Big)|\nabla u|^2dv_g\\
&=
n^2\intm e^{2u}(|H|^2+c)dv_g+2(n+2)\intm e^{2u}|\nabla u|^2dv_g\\
&\qquad +\frac{n^2-2n-4}{2(n-1)}\intm R e^{2u}dv_g+\frac{n-4}{2}\intm Qdv_g.
\end{aligned}
\end{equation}

From Gauss equation of $\phi: M^n\longrightarrow R^{n+p}(c)$
\begin{equation*}
R=n(n-1)c+n|H|^2-|\textbf{h}|^2
\end{equation*}
and $|\textbf{h}|^2\geq n|H|^2$, we have
\begin{equation}\label{p15}
R\leq n(n-1)(|H|^2+c).
\end{equation}

The equality holds in (\ref{p15}) if and only if $\phi: M^n\longrightarrow R^{n+p}(c)$ is
a total umbilical submanifold (see \cite{Li2002}).

By (\ref{p14}) and (\ref{p15}), we have
\begin{equation}\label{p16}
\begin{aligned}
\ld_2V(M)&\leq
\frac12n(n^2-4)\intm e^{2u}(|H|^2+c)dv_g+\frac{n-4}{2}\intm Qdv_g\\
&\qquad+2(n+2)\intm e^{2u}|\nabla u|^2dv_g.
\end{aligned}
\end{equation}

From (\ref{p7}), we have
\begin{equation}\label{p17}
\begin{aligned}
\ld_2V(M^n)&\leq
\frac12n(n^2-4)\intm\Big(|H|^2+c\Big)^2dv_g+\frac{n-4}{2}\intm Qdv_g\\
&\quad -\left(\frac12(n-6)(n^2-4)-2(n+2)\right) \intm
e^{2u}|\nabla u|^2dv_g\\
&=\frac12n(n^2-4)\intm\Big(|H|^2+c\Big)^2dv_g+\frac{n-4}{2}\intm Qdv_g\\
&\quad -\frac12(n+2)(n^2-8n+8)\intm e^{2u}|\nabla u|^2.
\end{aligned}
\end{equation}
Therefore, the inequality (\ref{Reilly1}) follows immediately from inequality (\ref{p17}) if $n\geq 7$.

If the equality holds in (\ref{Reilly1}), all the inequalities become equalities from
(\ref{2eigenvalue}) to (\ref{p17}). From (\ref{p17}), we can get $\n u=0$, i.e.
$u=constant$. In this case, (\ref{p9}) becomes equality, and then we can infer $\tilde
H=0$. (\ref{p3}) imply
\begin{equation}\label{Hcont}
|H|^2+c=e^{2u}=constant.
\end{equation}

Equality case in (\ref{p15}) give us $|\textbf{h}|^2=n|H|^2$, that is,
\begin{equation}\label{umb}
h_{ij}^\alpha=H^\alpha\delta_{ij},
\end{equation}
i.e., $\phi(M^n)$ is a totally umbilical submanifold in $R^{n+p}(c)$ (in \cite{SI00},
also called a geodesic sphere).

From (\ref{umb})and  Gauss equation of $\phi$,  we have
\begin{equation}\label{Gauss3}
\begin{aligned}
R_{ijkl}&=c(\delta_{ik}\delta_{jl}-\delta_{il}\delta_{jk})+h^\alpha_{ik}h^\alpha_{jl}-h^\alpha_{il}h^\alpha_{jk}\\
&=c(\delta_{ik}\delta_{jl}-\delta_{il}\delta_{jk})+H^\alpha
H^\alpha \delta_{ik}\delta_{jl}-H^\alpha H^\alpha  \delta_{il}\delta_{jk}\\
&=(|H|^2+c)(\delta_{ik}\delta_{jl}-\delta_{il}\delta_{jk}),\\
R_{ij}&=(n-1)(|H|^2+c)\delta_{ij},\\
R&=n(n-1)(|H|^2+c).
\end{aligned}
\end{equation}
where $R_{ijkl}, R_{ij}$ and $R$ are the components of Riemannian curvature tensor, the
Ricci tensor and scalar curvature of $M^n$, respectively.

By the definition of $Q$-curvature in (\ref{Qcurvature}), we have by use of (\ref{abcd})
\begin{equation}\label{Qcurvature1}
\begin{aligned}
Q&=-\frac{2}{(n-2)^2}|Ric|^2+\frac{n^3-4n^2+16n-16}{8(n-1)^2(n-2)^2}R^2\\
&=-\frac{2}{(n-2)^2}(n-1)^2(|H|^2+c)^2\delta_{ik}\delta_{ik}\\
&\qquad+\frac{n^3-4n^2+16n-16}{8(n-1)^2(n-2)^2}
n^2(n-1)^2(|H|^2+c)^2\\
&=\frac18n(n^2-4)\left(|H|^2+c\right)^2.
\end{aligned}
\end{equation}

Therefore, for the equality case in (1.6), we have
\begin{equation}\label{ld2}
\ld_2=\frac{1}{16}n(n+4)(n^2-4)(|H|^2+c)^2.
\end{equation}
From (\ref{ld2}), we have
\begin{equation}\label{mean}
|H|^2+c=4\sqrt{\frac{\ld_2}{n(n+4)(n^2-4)}}.
\end{equation}
Therefore, from (\ref{Gauss3}) and (\ref{mean}), we deduce that $\phi(M)$ is a geodesic
sphere $\Bbb S^n(r_c)$ with radius $r_c$ defined by (\ref{radius}).

Conversely, suppose that  $\phi(M)$ is a geodesic sphere $\Bbb S^m(r_c)$ with radius $r_c$ defined by (\ref{radius}) in space form $R^{n+p}(c)$. It is  easily deduced that the section curvature
\begin{equation}\label{sectionc}
R_{ijij}=4\sqrt{\frac{\ld_2}{n(n+4)(n^2-4)}},\quad i\not=j.
\end{equation}
From (\ref{Gauss3}), we obtain (\ref{mean}). Therefore the equality holds in
(\ref{Reilly1}). We complete the proof of Theorem 1.1.

\begin{rem} If we assume that the scalar curvature $R$ is
nonnegative, from (\ref{p15}) we have
\begin{equation}\label{rsq}
  R^2\leq n^2(n-1)^2(|H|^2+c)^2.
\end{equation}
Inserting (\ref{rsq}) into (\ref{Reilly2}), we have under $R\geq 0$ and the same
assumptions as in the theorem \ref{main thm}
\begin{equation}\label{Reilly3}
\ld_2V(M)\leq \frac{1}{16}n(n+4)(n^2-4)\intm(|H|^2+c)^2.
\end{equation}
Moreover, the equality holds if and only if $M^n$ is an n-dimensional geodesic sphere.
\end{rem}

\newpage


\begin{thebibliography}{99}

\bibitem {Branson} T. P. Branson, \textit{Group representations arising from Lorentz conformal geometry}, J. Funct. Anal.
74 (1987), 199-291.

\bibitem {CHY2004}S.-Y. A. Chang, F. Hang and P. Yang, \textit{On a class of locally conformally flat manifolds}, Int.
Math. Res. Not. 2004, no. 4, 185-209.

\bibitem{ChenLi09}D. G. Chen,  H. Li,
\textit{The sharp estimates for the first eigenvalue of Paneitz operator in 4-manifold},
2009, preprint.

\bibitem {Gursky99}M. Gursky, \textit{The principal eigenvalue of a conformally invariant differential operator,
with an application to semilinear elliptic PDE}, Comm. Math. Physics, 207 (1999),
131-143.

\bibitem {HR2001}E. Hebey, F. Robert, \textit{Coercivity and Struwe's compactness for Paneitz type
operators with constant coefficients}, Calculus of variations and PDE 13 (2001), no. 4,
491--517.

\bibitem {Li2002}H. Li, \textit{Willmore submanifolds in a sphere}, Math. Res. Lett. 9(2002), 771-790.

\bibitem {LY82} P. Li and S. -T. Yau, \textit{A new conformal invariant and its application to
 the Willmore conjecture and the first eigenvlalue of compact surfaces}, Invent. Math. 69 (1982), 269-291.

\bibitem {Paneitz83}S. Paneitz, \textit{A quartic conformally covariant differential operator for arbitrary
pseudo-Riemannian manifolds}, preprint, 1983.

\bibitem {Reilly77} R. Reilly, \textit{On the first eigenvalue of the Laplacian for compact submanifolds
of Euclidean space}, Comment. Math. Helv. 52 (1977), 525-533.

\bibitem {RY1994} R. Schoen, S.-T. Yau, Lectures on Differential Geometry, International Press, Boston, (1994).

\bibitem{SI92}A. El Soufi, S. Ilias, \textit{ Une in\'{e}galit\'{e} du
type ¡°Reilly¡± pour les sous-vari\'{e}t\'{e}s de l\'{e}space hyperbolique}, Comment.
Math. Helvitici 67(1992), 167-181.

\bibitem{SI00} A. El Soufi, and S. Ilias, \textit{Second eigenvalue of Schr\"{o}dinger operators and mean
curvature}, Commun. Math. Phys. 208 (2000), 761-770.

\bibitem {XuYang2001}P. C. Yang, X. W. Xu,  \textit{Positivity of Paneitz operators}, Discrete and Continuous
Dynamical Systems. 7(2) (2001), 329-342.

\bibitem {XuYang2002}P. C. Yang,  X. W. Xu,  \textit{Conformal energy in four dimension},
Math. Ann. 324 (2002), no. 4, 731-742.
\end{thebibliography}
\end{document}